\newtheorem{theorem}{Theorem}[section]
\newtheorem{corollary}[theorem]{Corollary}
\newtheorem{example}[theorem]{Example}
\newtheorem{lemma}[theorem]{Lemma}
\newtheorem{proposition}[theorem]{Proposition}
\newenvironment{proof}[1][Proof]{\noindent\textbf{#1.} }{\ \rule{0.5em}{0.5em}}
\begin{document}

\title{Evaluating characterizations of homomorphisms on truncated vector lattices of functions}
\author{Karim Boulabiar\thanks{Corresponding author:
\texttt{karim.boulabiar@fst.utm.tn}}\quad and\quad Sameh Bououn\medskip\\{\small Laboratoire de Recherche LATAO}\\{\small D\'{e}partement de Mathematiques, Facult\'{e} des Sciences de Tunis}\\{\small Universit\'{e} de Tunis El Manar, 2092, El Manar, Tunisia}}
\date{\textit{{\small Dedicated to the memory of Professor Abdelmajid Triki}}}
\maketitle

\begin{abstract}
Let $L$ be a (non necessarily unital) truncated vector lattice of real-valued
functions on a nonempty set $X$. A nonzero linear functional $\psi$ on $L$ is
called a truncation homomorphism if it preserves truncation, i.e.,%
\[
\psi\left(  f\wedge\mathbf{1}_{X}\right)  =\min\left\{  \psi\left(  f\right)
,1\right\}  \text{ for all }f\in L.
\]
We prove that a linear functional $\psi$ on $L$ is a truncation homomorphism
if and only if $\psi$ is a lattice homomorphism and%
\[
\sup\left\{  \psi\left(  f\right)  :f\leq\mathbf{1}_{X}\right\}  =1.
\]
This allows us to prove different evaluating characterizations of truncation
homomorphisms. In this regard, a special attention is paid to the continuous
case and various results from the existing literature are generalized.

\end{abstract}

\noindent{\small \textbf{2010 Mathematics Subject Classification.} 46A40; }

\noindent{\small \textbf{Keywords}. evaluation, net, lattice homomorphism,
realcompact, Stone property, Stone-\v{C}ech compactification, truncation
homomorphism, truncated vector sublattice.}

\section{Introduction}

It is well-known that a linear functional $\psi$ on the lattice-ordered
algebra $C\left(  X\right)  $ of all real-valued continuous functions on a
Tychonoff space $X$ is a unital lattice homomorphism if and only if it is an
evaluation at some point $x$ of $X$, i.e.,%
\[
\psi\left(  f\right)  =f\left(  x\right)  \quad\text{for all }f\in C\left(
X\right)
\]
(see, e.g., Theorem 2.33 in \cite{AB06}). In their remarkable papers
\cite{GJ04,GJ01}, Garrido and Jaramillo investigated the extent to which such
a representation can be generalized to a wider class of unital vector
sublattices of $C\left(  X\right)  $. In this regard, they have mainly proved
that if $\psi$ is a linear functional on a unital vector sublattice $L$ of
$C\left(  X\right)  $, then $\psi$ is a unital lattice homomorphism if and
only if $\psi$ is an evaluation at some point in the Stone-\v{C}ech
compactification $\beta X$ of $X$. They obtained, as consequences, some
necessary and sufficient conditions on $L$ for $X$ to be $L$-realcompact,
i.e., any unital lattice homomorphism on $L$ is an evaluation at some point in
$X$. They also used their aforementioned representation theorem to establish
the equivalence between unital lattice homomorphisms and positive algebra
homomorphisms on unital lattice-ordered subalgebras of $C\left(  X\right)  $.
Although they cover a quite large spectrum of function lattices, these
results, relevant as they are, cannot deal with the non-unital case. It seems
to be natural therefore to look beyond the framework of lattices containing
the constant functions. From this point of view, we have thought about vector
sublattices possessing the so-called Stone property. Recall here that a vector
subspace $E$ of the lattice-ordered algebra $\mathbb{R}^{X}$ of all
real-valued functions on an arbitrary non-empty $X$ is said to possess the
\textsl{Stone property} if $E$ contains with any function $f$ the function
$f\wedge\mathbf{1}_{X}$ defined by%
\[
\left(  \mathbf{1}_{X}\wedge f\right)  \left(  x\right)  =1\wedge f\left(
x\right)  =\min\left\{  1,f\left(  x\right)  \right\}  \text{\quad for all
}x\in X,
\]
where $\mathbf{1}_{X}$ is the indicator (or characteristic) function of $X$.
We call a \textsl{truncated vector sublattice} of $\mathbb{R}^{X}$, after
Fremlin in \cite{F74}, any vector sublattice $L$ of $\mathbb{R}^{X}$ which
possesses the Stone property (we do not assume that $\mathbf{1}_{X}$ is
present in $L$). As a matter of fact, the strength of the relationship between
this structure and duality is not a new idea. This goes back to the mid-19th
Century when Stone himself proved that, for every $\sigma$-order continuous
positive linear functional $\psi$ on a truncated vector sublattice $L$ of
$\mathbb{R}^{X}$, there exists a measure $\mu$ on $X$ such that%
\[
\varphi\left(  f\right)  =\int_{X}fd\mu\text{\quad for all }f\in L
\]
(see, e.g., Theorem 4.5.2 in \cite{D04}). This fundamental result is, by now,
referred to as the \textsl{Daniell-Stone Representation Theorem}. It is,
therefore, surprising that there has been no study of evaluating properties of
homomorphisms on truncated vector sublattices of $\mathbb{R}^{X}$. This paper
will actually try to address this omission. Against this background, a
suitable concept of homomorphisms must be introduced, for manifest reasons of
compatibility. To meet this need, we drew inspiration from the recent work
\cite{B14} by Ball to define a \textsl{truncation homomorphism} on the
truncated vector sublattice $L$ of $\mathbb{R}^{X}$ as a \textit{nonzero}
linear functional $\psi$ on $L$ that \textsl{preserves truncation}, i.e.,%
\[
\psi\left(  \mathbf{1}_{X}\wedge f\right)  =1\wedge\psi\left(  \mathbf{1}%
_{X}\right)  =\min\left\{  1,\psi\left(  \mathbf{1}_{X}\right)  \right\}
\text{\quad for all }f\in L.
\]
A short synopsis of the content of this paper seems in order.

In Section 2, the connection between truncation homomorphisms and lattice
homomorphisms are considered in some details. For instance, we prove that any
truncation homomorphism is automatically a lattice homomorphisms, then we find
the missing condition for the converse to hold. The third section contains the
evaluating characterizations of truncation homomorphisms we are looking for.
Indeed, it turns out that a linear functional $\psi$ on a truncated vector
sublattice $L$ of $\mathbb{R}^{X}$ is a truncation homomorphism if and only if
$\psi$ is a $\overline{2}$-\textsl{evaluation} on $L$, i.e., for every $f,g\in
L$ and every $\varepsilon\in\left(  0,\infty\right)  $, the inequalities%
\[
\left\vert f\left(  x\right)  -\psi\left(  f\right)  \right\vert
\leq\varepsilon\quad\text{and\quad}\left\vert g\left(  x\right)  -\psi\left(
g\right)  \right\vert \leq\varepsilon
\]
hold for some $x\in X$ (depending on $f,g$ and $\varepsilon$). Also, we show
that for any truncation homomorphism $\psi$ on a truncated vector sublattice
$L$ of $\mathbb{R}^{X}$ there exists a net $\left(  x_{\sigma}\right)
_{\sigma}$ in $X$ such that%
\[
\lim f\left(  x_{\sigma}\right)  =f\left(  x\right)  \text{ in }%
\mathbb{R}\text{\quad for all }f\in L.
\]
This brings us to the last section, in which the continuous case is
investigated. We prove, among other characterizations, that any truncation
homomorphism on a truncated vector sublattice of $C\left(  X\right)  $ is an
evaluation at some point of $\beta X$ of $X$. As pointed out above, the unital
case was resolved (in an alternative way) by Garrido and Jaramillo. We end the
paper by providing sufficient (and sometimes necessary) conditions on $L$ for
$X$ to be $L$-realcompact, i.e., any truncation homomorphism on $L$ is a
one-point evaluation.

Efforts have been made to make this work more or less accessible to a large
audience in such a way it could be understood by readers with a standard
first-year graduate background on algebra and topology. In spite of that, we
use the great books \cite{AB06,F74} on Vector Lattices, \cite{GJ76,KM97} on
Real-Valued Functions, and \cite{E89,W70} on General Topology as sources for
unexplained terminology and notation (unless otherwise stated explicitly).

\section{Connection with lattice homomorphisms}

Our first discussion may well not have been quite on the agenda, but we think
that it is sufficiently interesting to be incorporated into the text. Recall
from the introduction that a vector subspace $E$ of $\mathbb{R}^{X}$ is said
to possess the \textsl{Stone property} if%
\[
f\wedge\mathbf{1}_{X}\in E\text{\quad for all }f\in E.
\]
It turns out that any vector subspace $\mathbb{R}^{X}$ possessing the Stone
property is a vector sublattice of $\mathbb{R}^{X}$, provided that it contains
the constant functions.

\begin{proposition}
Let $E$ be a vector subspace of $\mathbb{R}^{X}$ such that $\mathbf{1}_{X}\in
E$. Then $E$ possesses the Stone property if and only if $E$ is vector
sublattice of $\mathbb{R}^{X}$.
\end{proposition}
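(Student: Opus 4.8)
The plan is to prove the two implications separately, with essentially all of the work concentrated in the ``only if'' direction. For the ``if'' direction I would argue directly: if $E$ is a vector sublattice containing $\mathbf{1}_X$, then for every $f\in E$ the infimum $f\wedge\mathbf{1}_X$ lies in $E$, since $E$ is closed under finite infima and $\mathbf{1}_X\in E$. Hence $E$ possesses the Stone property, and this direction needs nothing further.

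For the ``only if'' direction, assume $E$ has the Stone property. A vector subspace of $\mathbb{R}^X$ is a vector sublattice precisely when it is stable under the positive-part map $f\mapsto f^{+}=f\vee\mathbf{0}$, because the remaining lattice operations follow from $f\vee g=f+\left(g-f\right)^{+}$ and $f\wedge g=-\left(\left(-f\right)\vee\left(-g\right)\right)$. So the whole matter reduces to showing that $f^{+}\in E$ whenever $f\in E$. The key point is the pointwise identity
\[
h-\left(h\wedge\mathbf{1}_X\right)=\left(h-\mathbf{1}_X\right)^{+}\quad\text{for every }h\in\mathbb{R}^X,
\]
which merely records that $a-\min\left\{a,1\right\}=\max\left\{a-1,0\right\}$ at each point. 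I would exploit this by a translation: given $f\in E$, put $h=f+\mathbf{1}_X$, which again belongs to $E$ because $E$ is a subspace containing $\mathbf{1}_X$. The Stone property gives $h\wedge\mathbf{1}_X\in E$, so the difference $h-\left(h\wedge\mathbf{1}_X\right)$ lies in $E$ as well; by the identity this difference equals $\left(h-\mathbf{1}_X\right)^{+}=f^{+}$. Therefore $f^{+}\in E$, and the reduction above completes the argument.

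The hard part is locating the single algebraic identity that converts the ``truncation at level one'' furnished by the Stone property into the ``truncation at level zero'' encoded by the positive part; once the constant $\mathbf{1}_X$ is available to shift the threshold, everything else is routine manipulation inside the vector subspace. The only checks I would carry out carefully are the boundary behaviour of the identity (the two cases $a\geq1$ and $a<1$) and the standard fact that closure under $f\mapsto f^{+}$ upgrades a vector subspace to a vector sublattice.
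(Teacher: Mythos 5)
Your proposal is correct and takes essentially the same route as the paper: both arguments use the constant $\mathbf{1}_{X}$ to translate, apply the Stone truncation, and recover a unary lattice operation from a pointwise algebraic identity. The paper obtains closure under the modulus from $\left\vert \mathbf{1}_{X}-f\right\vert =\mathbf{1}_{X}+f-2\left( f\wedge \mathbf{1}_{X}\right) $ together with the substitution $f=\mathbf{1}_{X}-g$, whereas you obtain closure under the positive part from $\left( f+\mathbf{1}_{X}\right) -\left( \left( f+\mathbf{1}_{X}\right) \wedge \mathbf{1}_{X}\right) =f^{+}$; since $\left\vert g\right\vert =2g^{+}-g$, these identities are equivalent and the difference is purely cosmetic.
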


\begin{proof}
Sufficiency being straightforward, we prove Necessity. Assume that $E$
possesses the Stone property and choose $f\in E$. Observe that%
\[
\left\vert \mathbf{1}_{X}-f\right\vert =\mathbf{1}_{X}+f-2\left(
f\wedge\mathbf{1}_{X}\right)  \in E
\]
Thus, if $g$ is an arbitrary element in $E$ then $\mathbf{1}_{X}-g\in E$ and
so%
\[
\left\vert g\right\vert =\left\vert \mathbf{1}_{X}-\left(  \mathbf{1}%
_{X}-g\right)  \right\vert \in E.
\]
This implies that $E$ is a vector sublattice of $\mathbb{R}^{X}$, as required.
\end{proof}

We thought at a moment that the result should hold for any vector subspace of
$\mathbb{R}^{X}$. However, both implications are not true in general as the
following examples show.

\begin{example}
\begin{enumerate}
\item[\emph{(i)}] A function $f\in\mathbb{R}^{\left[  0,\infty\right)  }$ is
said to be \textsl{essentially linear} if there exists $x_{f}>0$ and $a_{f}%
\in\mathbb{R}$ such that%
\[
f\left(  x\right)  =a_{f}x\quad\text{for all }x\in\left(  x_{f},\infty\right)
.
\]
It is not hard to see that the subset $L$ of $\mathbb{R}^{\left[
0,\infty\right)  }$ is a vector sublattice of $\mathbb{R}^{\left[
0,\infty\right)  }$. However, if%
\[
i\left(  x\right)  =x\quad\text{for all }x\in\left[  0,\infty\right)
\]
then $i\in L$, while $i\wedge\mathbf{1}_{\left[  0,\infty\right)  }\notin L$.
This means that $L$ does not possess the Stone property.

\item[\emph{(ii)}] For every $f\in C\left(  \mathbb{R}\right)  $, we define
$\tau f\in C\left(  \mathbb{R}\right)  $ by%
\[
\left(  \tau f\right)  \left(  x\right)  =xf\left(  x\right)  \quad\text{for
all }x\in\mathbb{R}.
\]
Obviously, the set%
\[
E=\left\{  \tau f:f\in C\left(  \mathbb{R}\right)  \right\}
\]
is a vector subspace of $\mathbb{R}^{\mathbb{R}}$. Choose $f\in E$ and define
$g\in C\left(  \mathbb{R}\right)  $ by%
\[
g\left(  x\right)  =\frac{f\left(  x\right)  }{\sup\left\{  f\left(  x\right)
,1\right\}  }\quad\text{for all }x\in\mathbb{R}.
\]
It is an elementary exercise to show that%
\[
\tau f\wedge\mathbf{1}_{\mathbb{R}}=\tau g\in E,
\]
meaning that $E$ does possess the Stone property. Nevertheless, $E$ is not a
vector sublattice of $\mathbb{R}^{\mathbb{R}}$ since, if $e$ is the function
given by%
\[
e\left(  x\right)  =x\quad\text{for all }x\in\mathbb{R},
\]
then $e\in E$ while $\left\vert e\right\vert \notin E$.
\end{enumerate}
\end{example}

Now, let's get to the heart of the matter. As before, we call a
\textsl{truncated vector sublattice} of $\mathbb{R}^{X}$ any vector sublattice
$L$ of $\mathbb{R}^{X}$ possessing the Stone property. We emphasize that we do
not assume that truncated vector sublattices of $\mathbb{R}^{X}$ contain
$\mathbf{1}_{X}$. A vector sublattice of $\mathbb{R}^{X}$ containing
$\mathbf{1}_{X}$ is called a \textsl{unital vector sublattice} of
$\mathbb{R}^{X}$. Obviously, any unital vector sublattice of $\mathbb{R}^{X}$
is a truncated vector sublattice of $\mathbb{R}^{X}$. A \textit{nonzero}
linear functional $\psi$ on the truncated vector sublattice $L$ of
$\mathbb{R}^{X}$ is called a \textsl{truncation homomorphism} if%
\[
\psi\left(  f\wedge\mathbf{1}_{X}\right)  =\psi\left(  f\right)  \wedge
1=\min\left\{  \psi\left(  f\right)  ,1\right\}  \quad\text{for all }f\in L.
\]
Also, recall that the linear functional $\psi$ on a vector sublattice $L$ of
$\mathbb{R}^{X}$ is called a \textsl{lattice homomorphism} if%
\[
\psi\left(  f\wedge g\right)  =\psi\left(  f\right)  \wedge\psi\left(
g\right)  =\min\left\{  \psi\left(  f\right)  ,\psi\left(  g\right)  \right\}
\quad\text{for all }f,g\in L.
\]
Clearly, a linear functional $\psi$ on $L$ is a lattice homomorphism if and
only if%
\[
\left\vert \psi\left(  f\right)  \right\vert =\psi\left(  \left\vert
f\right\vert \right)  \quad\text{for all }f\in L.
\]
Notice that any lattice homomorphism $\psi$ on the vector sublattice $L$ of
$\mathbb{R}^{X}$ is \textsl{positive} (and thus increasing), that is to say,%
\[
\psi\left(  f\right)  \geq0\quad\text{for all }f\in L^{+},
\]
where $L^{+}$ denotes the set of all positive functions in $L$.

Connections between truncation homomorphisms and lattice homomorphisms on
truncated vector sublattices of functions are studied next.

\begin{lemma}
\label{Stone-Riesz}Any truncation homomorphism on a truncated vector
sublattice $L$ of $\mathbb{R}^{X}$ is a lattice homomorphism on $L$.
\end{lemma}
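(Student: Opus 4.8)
The plan is to reduce everything to the characterization recalled just above the statement, namely that a linear functional is a lattice homomorphism precisely when $|\psi(f)|=\psi(|f|)$ for all $f\in L$, and then to prove this single scalar identity. First I would upgrade the hypothesis, which only controls truncation at the level $\mathbf{1}_X$, to truncation at every positive level: for $\lambda>0$ and $f\in L$ the function $f\wedge\lambda\mathbf{1}_X=\lambda\big((\lambda^{-1}f)\wedge\mathbf{1}_X\big)$ lies in $L$ by the Stone property applied to $\lambda^{-1}f$, and homogeneity of $\psi$ together with the truncation identity gives $\psi(f\wedge\lambda\mathbf{1}_X)=\psi(f)\wedge\lambda$.

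Next I would extract positivity directly from this scaled identity, before attempting any lattice identity. For $g\in L^{+}$ put $h=-g\le 0$; since $h\le 0<\lambda$ pointwise we have $h\wedge\lambda\mathbf{1}_X=h$, so $\psi(h)=\psi(h)\wedge\lambda$ for every $\lambda>0$, forcing $\psi(h)\le 0$ and hence $\psi(g)\ge 0$. Thus $\psi$ is positive, which in particular makes $\psi(|f|)\ge 0$, a fact the final step will genuinely need.

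The heart of the argument is an exact, limit-free decomposition of $|f|$ at each level $\lambda>0$. Writing $a=f\wedge\lambda\mathbf{1}_X$ and $b=(-f)\wedge\lambda\mathbf{1}_X$, a pointwise check gives $a+b=-(|f|-\lambda\mathbf{1}_X)^{+}$, so the ``excess'' $(|f|-\lambda\mathbf{1}_X)^{+}=-(a+b)$ is forced to lie in $L$ as a combination of truncations of $f$ and of $-f$; applying $\psi$ and running the same scalar computation at the number $\psi(f)$ yields $\psi\big((|f|-\lambda\mathbf{1}_X)^{+}\big)=(|\psi(f)|-\lambda)^{+}$. Combining this with the trivial splitting $|f|=(|f|\wedge\lambda\mathbf{1}_X)+(|f|-\lambda\mathbf{1}_X)^{+}$ and the scaled truncation identity applied to $|f|$ produces, with $P=\psi(|f|)$ and $Q=|\psi(f)|$, the purely numerical relation $P=(P\wedge\lambda)+(Q-\lambda)^{+}$ valid for all $\lambda>0$.

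Finally I would solve this scalar relation: choosing $\lambda\le\min(P,Q)$ when that minimum is positive collapses it to $P=Q$, while the degenerate cases $P=0$ or $Q=0$ are dispatched by letting $\lambda$ shrink to $0$. Here positivity ($P\ge 0$) is exactly what rules out spurious negative solutions and pins down $P=Q$, that is, $\psi(|f|)=|\psi(f)|$. The main obstacle is that no topology or order-continuity is available, so the naive idea of passing to the limit in $f\wedge\lambda\mathbf{1}_X\nearrow f$ is unavailable; the workaround is precisely the observation that the excess term $(|f|-\lambda\mathbf{1}_X)^{+}$ is itself an explicit member of $L$, which converts the whole question into the elementary scalar identity above.
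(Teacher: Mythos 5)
Your proof is correct --- each step checks out (the scaled truncation identity $\psi(f\wedge\lambda\mathbf{1}_{X})=\psi(f)\wedge\lambda$, the pointwise identity $(f\wedge\lambda\mathbf{1}_{X})+((-f)\wedge\lambda\mathbf{1}_{X})=-(|f|-\lambda\mathbf{1}_{X})^{+}$ forcing the excess into $L$, the splitting $|f|=(|f|\wedge\lambda\mathbf{1}_{X})+(|f|-\lambda\mathbf{1}_{X})^{+}$, and the scalar relation $P=(P\wedge\lambda)+(Q-\lambda)^{+}$), and the target identity $\psi(|f|)=|\psi(f)|$ is exactly the characterization of lattice homomorphisms recalled in the paper just before the statement --- but it takes a genuinely different route from the paper's. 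The paper proves the equivalent identity $\psi(f^{+})=\psi(f)^{+}$ and works only with truncation at the unit level: after establishing positivity essentially as you do (its computation $n\psi(f)=1\wedge\psi(nf)\leq1$ for $f\leq0$ is your argument with $\lambda=1/n$), it uses the pointwise identity $nf^{+}-\mathbf{1}_{X}\wedge nf^{+}=nf-\mathbf{1}_{X}\wedge nf$ to derive the bound $n\psi(f^{+})\leq\psi(nf)^{+}+1=n\psi(f)^{+}+1$, and concludes by squeezing $0\leq n\left[\psi(f^{+})-\psi(f)^{+}\right]\leq1$ for all $n$, i.e.\ an Archimedean argument. You instead upgrade the hypothesis to truncation at every level $\lambda>0$, evaluate $\psi$ exactly on both pieces of your decomposition of $|f|$, and solve the resulting scalar equation exactly at the single choice $\lambda=\min(P,Q)$, reserving the Archimedean limit $\lambda\downarrow0$ for the degenerate cases (where you correctly note that positivity is what excludes $P<0$). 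Both arguments rest on the same two pillars --- positivity extracted from truncated negative functions, and a pointwise lattice identity pushed through $\psi$ --- but your key identity and decomposition are different from the paper's. What your route buys: the exact formulas $\psi(f\wedge\lambda\mathbf{1}_{X})=\psi(f)\wedge\lambda$ and $\psi\left((|f|-\lambda\mathbf{1}_{X})^{+}\right)=(|\psi(f)|-\lambda)^{+}$ are reusable intermediate results that anticipate the spirit of Lemma \ref{Tech_2}, where $\psi$ must again be evaluated exactly on expressions built from shifted truncations. What the paper's route buys: it is shorter, never leaves the unit truncation level, and avoids the case analysis in your final scalar step.
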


\begin{proof}
Let $\psi$ be a truncation homomorphism on the truncated vector sublattice $L$
of $\mathbb{R}^{X}$. First, we claim that $\psi$ is positive. To this end,
choose $f\in L$ and $n\in\left\{  1,2,...\right\}  $. If $f\leq0$ then%
\[
n\psi\left(  f\right)  =\psi\left(  nf\right)  =\psi\left(  \mathbf{1}%
_{X}\wedge nf\right)  =1\wedge\psi\left(  nf\right)  \leq1.
\]
It follows that $\psi\left(  f\right)  \leq0$ because $n$ is arbitrary in
$\left\{  1,2,...\right\}  $. This yields that $\psi$ is positive, as
required. Now, let $f\in L$ and observe that%
\[
0\leq n\psi\left(  f^{+}\right)  -n\psi\left(  f\right)  ^{+}%
\]
because $\psi$ is positive. Moreover,%
\begin{align*}
n\psi\left(  f^{+}\right)   &  \leq\psi\left(  nf^{+}\right)  -1\wedge
\psi\left(  nf^{+}\right)  +1=\psi\left(  nf^{+}\right)  -\psi\left(
\mathbf{1}_{X}\wedge nf^{+}\right)  +1\\
&  =\psi\left(  nf^{+}-\mathbf{1}_{X}\wedge nf^{+}\right)  +1=\psi\left(
nf-\mathbf{1}_{X}\wedge nf\right)  +1\\
&  =\psi\left(  nf\right)  -1\wedge\psi\left(  nf\right)  +1=\psi\left(
nf\right)  ^{+}-1\wedge\psi\left(  nf\right)  ^{+}+1\\
&  \leq\psi\left(  nf\right)  ^{+}+1.
\end{align*}
Therefore,%
\[
0\leq n\left[  \psi\left(  f^{+}\right)  -\psi\left(  f\right)  ^{+}\right]
\leq1.
\]
We derive that $\psi\left(  f^{+}\right)  =\psi\left(  f\right)  ^{+}$ and the
proof is complete.
\end{proof}

It is all too clear that the converse of Lemma \ref{Stone-Riesz} fails. It is
natural therefore to ask for the missing condition for a lattice homomorphism
on a truncated vector sublattice of $\mathbb{R}^{X}$ to be a truncation
homomorphism. The following theorem answers this question.

\begin{theorem}
\label{equivalence}Let $\psi$ be a linear functional on a truncated vector
sublattice $L$ of $\mathbb{R}^{X}$. Then the following are equivalent.

\begin{enumerate}
\item[\emph{(i)}] $\psi$ is a truncation homomorphism.

\item[\emph{(ii)}] $\psi$ is a lattice homomorphism and%
\[
\sup\left\{  \psi\left(  f\right)  :f\in L\text{ and }f\leq\mathbf{1}%
_{X}\right\}  =1.
\]

\end{enumerate}
\end{theorem}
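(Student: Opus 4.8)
The plan is to treat the two implications separately, the forward one being essentially bookkeeping and the reverse one carrying the real content. For (i) $\Rightarrow$ (ii), I would first invoke Lemma \ref{Stone-Riesz} to obtain that $\psi$ is a lattice homomorphism for free. For the supremum, note that if $f\in L$ satisfies $f\leq\mathbf{1}_{X}$ then $f\wedge\mathbf{1}_{X}=f$, so the truncation identity gives $\psi(f)=\min\{\psi(f),1\}\leq1$; hence the supremum is at most $1$. To see that it is attained, I would use that $\psi$ is nonzero and positive: decomposing a function of nonzero image into its positive and negative parts produces some $g\in L^{+}$ with $\psi(g)>0$, and then for $n\geq1/\psi(g)$ the function $ng\wedge\mathbf{1}_{X}$ lies in $L$ by the Stone property, is $\leq\mathbf{1}_{X}$, and satisfies $\psi(ng\wedge\mathbf{1}_{X})=\min\{n\psi(g),1\}=1$.

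For (ii) $\Rightarrow$ (i), the nonzeroness of $\psi$ is immediate from the supremum being $1$, so it remains only to verify the truncation identity for an arbitrary $f\in L$. One inequality is cheap: since $f\wedge\mathbf{1}_{X}\leq f$ and $f\wedge\mathbf{1}_{X}\leq\mathbf{1}_{X}$, positivity of $\psi$ together with the supremum condition forces $\psi(f\wedge\mathbf{1}_{X})\leq\min\{\psi(f),1\}$. The entire difficulty is concentrated in the reverse inequality $\psi(f\wedge\mathbf{1}_{X})\geq\min\{\psi(f),1\}$, and here the obstacle is precisely that $\mathbf{1}_{X}$ need not belong to $L$, so I cannot simply evaluate $\psi$ at it and reduce to the defining identity of a lattice homomorphism.

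The device I would use to circumvent this is to approximate $\mathbf{1}_{X}$ from below by members of $L$. For any $h\in L$ with $h\leq\mathbf{1}_{X}$, the function $f\wedge h$ lies in $L$ and satisfies $f\wedge h\leq f\wedge\mathbf{1}_{X}$; applying the lattice homomorphism property and positivity gives $\min\{\psi(f),\psi(h)\}=\psi(f\wedge h)\leq\psi(f\wedge\mathbf{1}_{X})$. Taking the supremum over all such $h$ and using that $t\mapsto\min\{\psi(f),t\}$ is non-decreasing and continuous while $\sup\{\psi(h):h\in L,\ h\leq\mathbf{1}_{X}\}=1$, the left-hand side tends to $\min\{\psi(f),1\}$, which is exactly the missing inequality; combining the two bounds then finishes the argument. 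The step I expect to need the most care is this last interchange of $\min$ with the supremum, where I must confirm that the supremum of $\min\{\psi(f),\psi(h)\}$ over admissible $h$ genuinely equals $\min\{\psi(f),1\}$ and not something strictly smaller.
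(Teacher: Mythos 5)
Your proposal is correct and follows essentially the same route as the paper: for (i)$\Rightarrow$(ii) you invoke Lemma \ref{Stone-Riesz} and then exhibit a truncated, suitably scaled positive function on which $\psi$ takes the value $1$ (the paper uses $\mathbf{1}_{X}\wedge\tfrac{1}{\left\vert \psi\left(  f\right)  \right\vert }\left\vert f\right\vert$, you use $ng\wedge\mathbf{1}_{X}$, which is the same idea), and for (ii)$\Rightarrow$(i) you prove the easy inequality from positivity plus the supremum condition and the hard one by minorizing $f\wedge\mathbf{1}_{X}$ by $f\wedge h$ for $h\leq\mathbf{1}_{X}$ and interchanging $\min$ with the supremum, exactly as the paper does. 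The final interchange you flag as delicate is indeed valid, since $t\mapsto\min\left\{  \psi\left(  f\right)  ,t\right\}$ is nondecreasing and continuous, so the supremum passes through.
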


\begin{proof}
$\mathrm{(i)\Rightarrow(ii)}$ Assume that $\psi$ is a truncation homomorphism.
By Lemma \ref{Stone-Riesz}, $\psi$ is a lattice homomorphism. On the other
hand, if $f\in L$ with $f\leq\mathbf{1}_{X}$ then $f=f\wedge\mathbf{1}_{X}$
and so%
\[
\psi\left(  f\right)  =\psi\left(  f\wedge\mathbf{1}_{X}\right)  =\psi\left(
f\right)  \wedge1\leq1.
\]
Moreover, let $\lambda\in\mathbb{R}$ and suppose that $\lambda\geq\psi\left(
f\right)  $ for every $f\in L$ with $f\leq\mathbf{1}_{X}$. Choose $f\in L$
such that $\psi\left(  f\right)  \neq0$ (such a function $f$ exists because,
by definition, $\psi\neq0$) and put%
\[
g=\mathbf{1}_{X}\wedge\frac{1}{\left\vert \psi\left(  f\right)  \right\vert
}\left\vert f\right\vert .
\]
Clearly, $g\in L$ and $g\leq\mathbf{1}_{X}$. Hence,%
\[
\lambda\geq\psi\left(  g\right)  =\psi\left(  \mathbf{1}_{X}\wedge\frac
{1}{\left\vert \psi\left(  f\right)  \right\vert }\left\vert f\right\vert
\right)  =1\wedge\frac{\psi\left(  \left\vert f\right\vert \right)
}{\left\vert \psi\left(  f\right)  \right\vert }=1\wedge\frac{\left\vert
\psi\left(  f\right)  \right\vert }{\left\vert \psi\left(  f\right)
\right\vert }=1
\]
(where we use once again Lemma \ref{Stone-Riesz}). This means that%
\[
1=\sup\left\{  \psi\left(  f\right)  :f\in L\text{ and }f\leq\mathbf{1}%
_{X}\right\}  ,
\]
as desired.

$\mathrm{(ii)\Rightarrow(i)}$ Let $f\in L$ and observe that $\psi\left(
f\wedge\mathbf{1}_{X}\right)  \leq\psi\left(  f\right)  $ because $\psi$ is
positive. Moreover, $f\wedge\mathbf{1}_{X}\in L$ and $f\wedge\mathbf{1}%
_{X}\leq\mathbf{1}_{X}$, so, $\psi\left(  f\wedge\mathbf{1}_{X}\right)  \leq
1$. This means that $\psi\left(  f\wedge\mathbf{1}_{X}\right)  \leq\psi\left(
f\right)  \wedge1$. Conversely, pick $g\in L$ with $g\leq\mathbf{1}_{X}$ and
observe that from $f\wedge g\leq f\wedge\mathbf{1}_{X}$ it follows that%
\[
\psi\left(  f\wedge\mathbf{1}_{X}\right)  \geq\psi\left(  f\wedge g\right)
=\psi\left(  f\right)  \wedge\psi\left(  g\right)  .
\]
Accordingly,%
\begin{align*}
\psi\left(  f\wedge\mathbf{1}_{X}\right)   &  \geq\sup\left\{  \psi\left(
f\right)  \wedge\psi\left(  g\right)  :g\in L\text{ and }g\leq\mathbf{1}%
_{X}\right\} \\
&  =\psi\left(  f\right)  \wedge\sup\left\{  \psi\left(  g\right)  :g\in
L\text{ and }g\leq\mathbf{1}_{X}\right\} \\
&  =\psi\left(  f\right)  \wedge1.
\end{align*}
This ends the proof of the theorem.
\end{proof}

A lattice homomorphism $\psi$ on a unital vector sublattice $L$ of
$\mathbb{R}$ is said to be \textsl{unital} if $\psi\left(  \mathbf{1}%
_{X}\right)  =1$. Hence, we get the following as a direct inference of Theorem
\ref{equivalence}.

\begin{corollary}
A linear functional $\psi$ on a unital vector sublattice of $\mathbb{R}^{X}$
is a truncation homomorphism if and only if $\psi$ is a unital lattice homomorphism.
\end{corollary}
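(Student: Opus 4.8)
The plan is to deduce this directly from Theorem \ref{equivalence}, reducing the supremum condition appearing there to the single requirement $\psi\left(  \mathbf{1}_{X}\right)  =1$ whenever $\mathbf{1}_{X}\in L$. By definition a unital lattice homomorphism is precisely a lattice homomorphism $\psi$ satisfying $\psi\left(  \mathbf{1}_{X}\right)  =1$, so the asserted equivalence will follow once I show that, in the unital setting, the condition
\[
\sup\left\{  \psi\left(  f\right)  :f\in L\text{ and }f\leq\mathbf{1}_{X}\right\}  =1
\]
is the same as $\psi\left(  \mathbf{1}_{X}\right)  =1$.

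First I would recall that any lattice homomorphism $\psi$ on $L$ is positive, hence increasing. Therefore, for every $f\in L$ with $f\leq\mathbf{1}_{X}$ one has $\psi\left(  f\right)  \leq\psi\left(  \mathbf{1}_{X}\right)$. Since $L$ is unital, the function $\mathbf{1}_{X}$ itself belongs to $L$ and satisfies $\mathbf{1}_{X}\leq\mathbf{1}_{X}$, so it is admissible in the supremum; by the monotonicity just noted it is in fact the maximizer. Consequently
\[
\sup\left\{  \psi\left(  f\right)  :f\in L\text{ and }f\leq\mathbf{1}_{X}\right\}  =\psi\left(  \mathbf{1}_{X}\right)  .
\]
This is the only computation the argument requires.

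With this identity in hand the corollary is immediate. By Theorem \ref{equivalence}, $\psi$ is a truncation homomorphism if and only if $\psi$ is a lattice homomorphism and the above supremum equals $1$; by the displayed identity the latter condition reads $\psi\left(  \mathbf{1}_{X}\right)  =1$, i.e.\ $\psi$ is unital. Hence $\psi$ is a truncation homomorphism precisely when it is a unital lattice homomorphism. Since the whole argument is a direct specialization of Theorem \ref{equivalence}, there is no genuine obstacle to overcome; the only point needing care is the observation that positivity of lattice homomorphisms forces the supremum to be attained at $\mathbf{1}_{X}$, a step that relies essentially on $L$ containing $\mathbf{1}_{X}$.
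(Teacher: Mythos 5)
Your proof is correct and follows exactly the route the paper intends: the paper states this corollary as a ``direct inference of Theorem \ref{equivalence}'', and your argument simply spells out that inference, using positivity of lattice homomorphisms to show the supremum $\sup\{\psi(f):f\in L,\ f\leq\mathbf{1}_{X}\}$ is attained at $\mathbf{1}_{X}$, so that the supremum condition reduces to $\psi(\mathbf{1}_{X})=1$. Nothing is missing; this is the same approach, just made explicit.
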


\section{Evaluating characterizations}

We start this section with two technical lemmas.

\begin{lemma}
\label{Tech_1}If $f,g\in\mathbb{R}^{X}$ then%
\[
\left\vert f+\mathbf{1}_{X}\right\vert \wedge g=\left\vert f\right\vert
-2\left(  f^{-}\wedge\mathbf{1}_{X}\right)  +\left(  g-\left\vert f\right\vert
+2\left(  f^{-}\wedge\mathbf{1}_{X}\right)  \right)  \wedge\mathbf{1}_{X}.
\]

\end{lemma}

\begin{proof}
First, observe that%
\begin{align*}
\left\vert f+\mathbf{1}_{X}\right\vert  &  =\left(  f+\mathbf{1}_{X}\right)
\vee\left(  -f-\mathbf{1}_{X}\right) \\
&  =\left[  \left(  f+\mathbf{1}_{X}\right)  \vee\left(  f-\mathbf{1}%
_{X}\right)  \right]  \vee\left(  -f-\mathbf{1}_{X}\right) \\
&  =\left(  f+\mathbf{1}_{X}\right)  \vee\left[  \left(  f-\mathbf{1}%
_{X}\right)  \vee\left(  -f-\mathbf{1}_{X}\right)  \right] \\
&  =\left(  \left\vert f\right\vert -2f^{-}+\mathbf{1}_{X}\right)  \vee\left(
\left\vert f\right\vert -\mathbf{1}_{X}\right) \\
&  =\left\vert f\right\vert -\left[  \left(  2f^{-}-\mathbf{1}_{X}\right)
\wedge\mathbf{1}_{X}\right] \\
&  =\left\vert f\right\vert -2\left(  f^{-}\wedge\mathbf{1}_{X}\right)
+\mathbf{1}_{X}.
\end{align*}
It follows that%
\begin{align*}
\left\vert f+\mathbf{1}_{X}\right\vert \wedge g  &  =\left(  \left\vert
f\right\vert -2\left(  f^{-}\wedge\mathbf{1}_{X}\right)  +\mathbf{1}%
_{X}\right)  \wedge g\\
&  =\left\vert f\right\vert -2\left(  f^{-}\wedge\mathbf{1}_{X}\right)
+\left(  g-\left\vert f\right\vert +2\left(  f^{-}\wedge\mathbf{1}_{X}\right)
\right)  \wedge\mathbf{1}_{X},
\end{align*}
which is the desired equality.
\end{proof}

\begin{lemma}
\label{Tech_2}Let $L$ be a truncated vector sublattice of $\mathbb{R}^{X}$ and
$\psi$ be truncation homomorphism on $L$. Then%
\[
\left\vert f+\lambda\right\vert \wedge g\in L\quad\text{and\quad}\psi\left(
\left\vert f+\lambda\right\vert \wedge g\right)  =\left\vert \psi\left(
f\right)  +\lambda\right\vert \wedge\psi\left(  g\right)
\]
hold for all $f,g\in L$ and $\lambda\in\mathbb{R}.$
\end{lemma}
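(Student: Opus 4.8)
The plan is to treat the distinguished case $\lambda=1$ first, where Lemma~\ref{Tech_1} does all the structural work, and then to bootstrap the general statement from it by positive homogeneity and a sign flip. For $\lambda=1$, membership $\left\vert f+\mathbf{1}_{X}\right\vert \wedge g\in L$ follows by reading the right-hand side of Lemma~\ref{Tech_1} term by term: since $L$ is a vector sublattice we have $\left\vert f\right\vert ,f^{-}\in L$, the Stone property gives $f^{-}\wedge\mathbf{1}_{X}\in L$ and $\left(  g-\left\vert f\right\vert +2\left(  f^{-}\wedge\mathbf{1}_{X}\right)  \right)  \wedge\mathbf{1}_{X}\in L$, and the stated expression is then a linear combination of elements of $L$. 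This is precisely where the hypothesis that $L$ is \emph{truncated} (rather than merely a vector sublattice) is used, and it is the reason the identity of Lemma~\ref{Tech_1} was isolated in the form it was.

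For the value, I would apply $\psi$ to the identity of Lemma~\ref{Tech_1} and push it through each operation. Linearity handles the sums and scalar multiples; Lemma~\ref{Stone-Riesz} (that $\psi$ is a lattice homomorphism) converts $\psi\left(  \left\vert f\right\vert \right)  =\left\vert \psi\left(  f\right)  \right\vert $ and $\psi\left(  f^{-}\right)  =\psi\left(  f\right)  ^{-}$; and the truncation-homomorphism property turns each occurrence of $\wedge\mathbf{1}_{X}$ into $\wedge1$, giving $\psi\left(  f^{-}\wedge\mathbf{1}_{X}\right)  =\psi\left(  f\right)  ^{-}\wedge1$ and, for the outer truncation, $\psi\left(  \left(  \cdots\right)  \wedge\mathbf{1}_{X}\right)  =\psi\left(  \cdots\right)  \wedge1$. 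The resulting real-number expression is exactly the right-hand side of Lemma~\ref{Tech_1} with $f,g$ replaced by the scalars $\psi\left(  f\right)  ,\psi\left(  g\right)  $; since that lemma holds for all functions in $\mathbb{R}^{X}$, in particular for constants, it collapses to $\left\vert \psi\left(  f\right)  +1\right\vert \wedge\psi\left(  g\right)  $, which is the claim for $\lambda=1$.

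The remaining cases reduce cleanly. For $\lambda>0$ I would use $\left\vert f+\lambda\right\vert \wedge g=\lambda\left(  \left\vert \tfrac{1}{\lambda}f+\mathbf{1}_{X}\right\vert \wedge\tfrac{1}{\lambda}g\right)$, apply the $\lambda=1$ case to $\tfrac{1}{\lambda}f,\tfrac{1}{\lambda}g\in L$, and multiply through by $\lambda$ (using $\lambda\left(  u\wedge v\right)  =\left(  \lambda u\right)  \wedge\left(  \lambda v\right)$ for $\lambda>0$); the case $\lambda=0$ is immediate since $\left\vert f\right\vert \wedge g\in L$ and $\psi$ is a lattice homomorphism; and $\lambda<0$ follows from the positive case via $\left\vert f+\lambda\right\vert =\left\vert \left(  -f\right)  +\left(  -\lambda\right)  \right\vert$ with $-f\in L$ and $-\lambda>0$. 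I expect the only genuinely delicate step to be the value computation for $\lambda=1$: one must verify that \emph{every} application of $\psi$ is licensed — in particular that $\psi$ may be pushed through the two $\wedge\mathbf{1}_{X}$ operations even though $\mathbf{1}_{X}$ itself need not belong to $L$ — and then correctly identify the outcome as the scalar instance of Lemma~\ref{Tech_1} rather than recomputing it by hand.
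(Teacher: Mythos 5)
Your proposal is correct and follows essentially the same route as the paper: Lemma \ref{Tech_1} supplies both the membership (via the Stone property applied to the two inner truncations) and the value computation (via Lemma \ref{Stone-Riesz} and the truncation property, then recognizing the scalar instance of Lemma \ref{Tech_1}), with the general $\lambda>0$ case obtained by homogeneity and $\lambda<0$ by a sign flip. The only cosmetic difference is that you isolate $\lambda=1$ and then rescale, whereas the paper folds the rescaling $u=f/\lambda$, $v=g/\lambda$ directly into the $\lambda>0$ case; the mathematics is identical.
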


\begin{proof}
Let $f,g\in L$ and $\lambda\in\mathbb{R}$. In view of Theorem
\ref{Stone-Riesz}, we have nothing to prove if $\lambda=0$. So, assume that
$\lambda>0$. For the sake of brevity, we put%
\[
u=\frac{1}{\lambda}f\quad\text{and\quad}v=\frac{1}{\lambda}g.
\]
Notice that $u,v\in L$. Using Lemma \ref{Tech_1}, we get%
\[
\left\vert f+\lambda\right\vert \wedge g=\lambda\left(  \left\vert
u+\mathbf{1}_{X}\right\vert \wedge v\right)  \in L.
\]
Furthermore, Theorem \ref{Stone-Riesz} together with Lemma \ref{Tech_1} yields
that%
\begin{align*}
\psi\left(  \left\vert u+\mathbf{1}_{X}\right\vert \wedge v\right)   &
=\psi\left(  \left\vert u\right\vert -2\left(  u^{-}\wedge\mathbf{1}%
_{X}\right)  +\left(  v-\left\vert u\right\vert +2\left(  u^{-}\wedge
\mathbf{1}_{X}\right)  \right)  \wedge\mathbf{1}_{X}\right) \\
&  =\left\vert \psi\left(  u\right)  \right\vert -2\left(  \psi\left(
u\right)  ^{-}\wedge1\right)  +1\wedge\left(  \psi\left(  v\right)
-\left\vert \psi\left(  u\right)  \right\vert +2\left(  \psi\left(  u\right)
^{-}\wedge1\right)  \right) \\
&  =\left(  \left\vert \psi\left(  u\right)  +1\right\vert \wedge\psi\left(
v\right)  \right)  =\frac{1}{\lambda}\left(  \left\vert \psi\left(  f\right)
+\lambda\right\vert \wedge\psi\left(  g\right)  \right)  .
\end{align*}
Thus,%
\[
\psi\left(  \left\vert f+\lambda\right\vert \wedge g\right)  =\left\vert
\psi\left(  f\right)  +\lambda\right\vert \wedge\psi\left(  g\right)  .
\]
Now, suppose that $\lambda<0$. By the positive case, we have%
\[
\left\vert f+\lambda\right\vert \wedge g=\left\vert -f-\lambda\right\vert
\wedge g\in L
\]
and, analogously,%
\begin{align*}
\psi\left(  \left\vert f+\lambda\right\vert \wedge g\right)   &  =\psi\left(
\left\vert -f-\lambda\right\vert \wedge g\right) \\
&  =\left\vert \psi\left(  -f\right)  -\lambda\right\vert \wedge\psi\left(
g\right) \\
&  =\left\vert \psi\left(  f\right)  +\lambda\right\vert \wedge\psi\left(
g\right)  .
\end{align*}
This completes the proof of the lemma.
\end{proof}

At this point, let $n\in\left\{  1,2,...\right\}  $. A linear functional
$\psi$ on a vector subspace $L$ of $\mathbb{R}^{X}$ is called an $\overline
{n}$\textsl{-evaluation} if, for every $f_{1},...,f_{n}\in L$ and every
$\varepsilon\in\left(  0,\infty\right)  $, there exists $x\in X$ such that%
\[
\left\vert \psi\left(  f_{k}\right)  -f_{k}\left(  x\right)  \right\vert
\leq\varepsilon\quad\text{for all }k\in\left\{  1,...,n\right\}  .
\]
We have gathered now all the ingredients we need to prove the central theorem
of this section.

\begin{theorem}
\label{evaluation}Let $L$ be a truncated vector sublattice of $\mathbb{R}^{X}$
and $\psi$ be a linear functional on $L$. Then the following are equivalent.

\begin{enumerate}
\item[\emph{(i)}] $\psi$ is a truncation homomorphism on $L$.

\item[\emph{(ii)}] $\psi$ is an $\overline{n}$-evaluation on $L$ for all
$n\in\left\{  1,2,...\right\}  $.

\item[\emph{(iii)}] $\psi$ is an $\overline{2}$-evaluation on $L$.

\item[\emph{(iv)}] There exists a net $\left(  x_{\sigma}\right)  $ of
elements of $X$ such that%
\[
\psi\left(  f\right)  =\lim f\left(  x_{\sigma}\right)  \text{ in }%
\mathbb{R}\text{\quad for all }f\in L.
\]

\end{enumerate}
\end{theorem}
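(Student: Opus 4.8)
The plan is to establish the cycle (i) $\Rightarrow$ (ii) $\Rightarrow$ (iii) $\Rightarrow$ (i), and then close the loop by showing the equivalence of (iv) with the others, most naturally via (iv) $\Rightarrow$ (iii) and (ii) $\Rightarrow$ (iv). The implications (ii) $\Rightarrow$ (iii) and (iv) $\Rightarrow$ (iii) are immediate: the former is a trivial specialization ($n=2$), and the latter follows because if $\psi(f)=\lim_\sigma f(x_\sigma)$ for every $f\in L$, then for any finite collection $f_1,\dots,f_n$ and any $\varepsilon>0$ the net eventually lands within $\varepsilon$ of each $\psi(f_k)$ simultaneously, so a single index $\sigma$ (hence a single point $x_\sigma\in X$) witnesses the $\overline n$-evaluation property. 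Thus $\overline 2$-evaluation is the pivotal notion, and the substantive work lies in (i) $\Rightarrow$ (ii), (iii) $\Rightarrow$ (i), and the construction in (ii) $\Rightarrow$ (iv).

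\emph{The main obstacle is (iii) $\Rightarrow$ (i)}, i.e. recovering the algebraic truncation-homomorphism identity from the purely approximate $\overline 2$-evaluation condition. Here I would proceed as follows. First verify $\psi$ is linear and positive: positivity is clear, since for $f\geq 0$ and any $\varepsilon>0$ there is $x$ with $|\psi(f)-f(x)|\leq\varepsilon$, whence $\psi(f)\geq f(x)-\varepsilon\geq-\varepsilon$, forcing $\psi(f)\geq 0$. To get the truncation identity $\psi(f\wedge\mathbf 1_X)=\psi(f)\wedge 1$, apply the $\overline 2$-evaluation condition to the pair $f$ and $f\wedge\mathbf 1_X$: for each $\varepsilon>0$ choose $x=x_\varepsilon$ making both $|\psi(f)-f(x)|$ and $|\psi(f\wedge\mathbf 1_X)-(f\wedge\mathbf 1_X)(x)|$ small. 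Since $(f\wedge\mathbf 1_X)(x)=f(x)\wedge 1$ holds pointwise, and $f(x)$ is within $\varepsilon$ of $\psi(f)$ while $(f\wedge\mathbf 1_X)(x)$ is within $\varepsilon$ of $\psi(f\wedge\mathbf 1_X)$, the continuity of $t\mapsto t\wedge 1$ lets me pass to the limit as $\varepsilon\to 0$ and conclude $\psi(f\wedge\mathbf 1_X)=\psi(f)\wedge 1$. I must separately check $\psi\neq 0$: if $\psi$ were the zero functional it would still be an $\overline 2$-evaluation, so the definition of $\overline n$-evaluation adopted here must implicitly exclude this, or I should note that (iii) is to be read together with $\psi$ being a \emph{nonzero} functional; I would flag this normalization explicitly rather than leave it tacit.

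For (i) $\Rightarrow$ (ii), the idea is that a truncation homomorphism, being a lattice homomorphism by Lemma \ref{Stone-Riesz}, should be ``approximately evaluated'' by points of $X$. Given $f_1,\dots,f_n\in L$ and $\varepsilon>0$, I would form an auxiliary function built from the $f_k$ measuring the deficit $\sum_k\bigl(|f_k-\psi(f_k)|\wedge\mathbf 1_X\bigr)$ or a suitable truncated combination, whose presence in $L$ is guaranteed by the Stone property together with Lemma \ref{Tech_2} (which precisely tells us that expressions of the form $|f+\lambda|\wedge g$ lie in $L$ and are computed by $\psi$ via the formula $|\psi(f)+\lambda|\wedge\psi(g)$). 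The point is to produce a single function $h\in L$ with $0\le h\le\mathbf 1_X$ such that $\psi(h)$ is strictly positive (using $\sup\{\psi(g):g\le\mathbf 1_X\}=1$ from Theorem \ref{equivalence}) yet $h$ vanishes off the set where all the $|f_k-\psi(f_k)|$ are small; positivity of $\psi$ then forces that set to be nonempty, yielding the required point $x$. This is where Lemma \ref{Tech_2} earns its keep, since it converts the pointwise truncated expressions into exact scalar identities under $\psi$. Finally, (ii) $\Rightarrow$ (iv) is a standard net construction: direct the index set by pairs $\sigma=(F,\varepsilon)$ where $F\subseteq L$ is finite and $\varepsilon>0$, ordered by $(F,\varepsilon)\le(F',\varepsilon')$ iff $F\subseteq F'$ and $\varepsilon'\le\varepsilon$, and let $x_\sigma\in X$ be a point witnessing the $\overline{|F|}$-evaluation property for $F$ and $\varepsilon$; then $\lim_\sigma f(x_\sigma)=\psi(f)$ for every $f\in L$, because $f$ is eventually in $F$ and the tolerance eventually drops below any prescribed threshold.
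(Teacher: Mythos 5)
Your plan is sound, and on the two hard algebraic implications it follows essentially the same route as the paper: for (iii) $\Rightarrow$ (i) the paper likewise applies the $\overline{2}$-evaluation condition to the pair $(f,\,f\wedge\mathbf{1}_{X})$ and uses the $1$-Lipschitz property of $t\mapsto t\wedge1$ (quoted there as Birkhoff's inequality), and for (i) $\Rightarrow$ (ii) it likewise manufactures a truncated ``deficit'' function in $L$ via Lemma \ref{Tech_2} and lets positivity of $\psi$ produce the witnessing point. Where you genuinely diverge is (ii) $\Rightarrow$ (iv): your direct construction --- index set of pairs $(F,\varepsilon)$ with $F\subseteq L$ finite, ordered by inclusion and refinement of tolerance, $x_{(F,\varepsilon)}$ a witness for $F$ at tolerance $\varepsilon$ --- is correct and considerably more economical than the paper's argument, which embeds $X$ into $\mathbb{R}^{L}$ with the Tychonoff product topology, shows that $(\psi(f))_{f\in L}$ lies in the closure of the image of $X$, extracts a net from that closure, and then needs a separate quotient construction to handle the case where $L$ does not separate the points of $X$. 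Your construction needs no separation hypothesis and no topology at all, so it is a real simplification. Your flag about the zero functional is also well taken and is a point the paper glosses over: on $L=\{f\in C([0,1]):f(0)=0\}$ the zero functional is an $\overline{n}$-evaluation for every $n$ (and satisfies (iv) with a constant net), yet by the paper's definition it is not a truncation homomorphism since $\psi\neq0$ is required; so (iii) and (iv) must indeed be read with that normalization made explicit.

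One concrete correction to your sketch of (i) $\Rightarrow$ (ii): the first auxiliary function you propose, $\sum_{k}\left(\left\vert f_{k}-\psi(f_{k})\right\vert\wedge\mathbf{1}_{X}\right)$, need not lie in $L$. Lemma \ref{Tech_2} only puts $\left\vert f+\lambda\right\vert\wedge g$ in $L$ when $g\in L$, and truncation by $\mathbf{1}_{X}$ itself is not available here: for $L=C_{0}(\mathbb{R})$, $f=0$ and $\lambda=1$, the function $\left\vert f+\lambda\right\vert\wedge\mathbf{1}_{\mathbb{R}}=\mathbf{1}_{\mathbb{R}}$ is not in $L$. The repair --- which is exactly the paper's device, and which the target properties you state for $h$ already point to --- is to truncate by an element of $L$ rather than by $\mathbf{1}_{X}$: using Theorem \ref{equivalence} and Lemma \ref{Stone-Riesz}, pick $e\in L$ with $0\leq e\leq\mathbf{1}_{X}$ and $\psi(e)>0$, put $\theta=\min\{1,\varepsilon\}$, and set $h=\theta^{-1}\bigl(e\wedge\bigvee_{k}\left\vert f_{k}-\psi(f_{k})\right\vert\bigr)$, which lies in $L$ by Lemma \ref{Tech_2} together with distributivity, and satisfies $\psi(h)=0$. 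Then $\psi(h-e)<0$, so positivity yields a point $x$ with $h(x)<e(x)$; since the minimum defining $\theta h(x)=e(x)\wedge\bigvee_{k}\left\vert f_{k}(x)-\psi(f_{k})\right\vert$ cannot then be attained by $e(x)$, one gets $\bigvee_{k}\left\vert f_{k}(x)-\psi(f_{k})\right\vert<\theta e(x)\leq\varepsilon$, as required. (Your dual variant, $h=\bigl(e-\theta^{-1}\bigl(e\wedge\bigvee_{k}\left\vert f_{k}-\psi(f_{k})\right\vert\bigr)\bigr)^{+}$, which has $\psi(h)=\psi(e)>0$ and vanishes off the good set, works equally well once the truncating function is taken from $L$.)
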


\begin{proof}
First, observe that the implications $\mathrm{(ii)\Rightarrow(iii)}$ and
$\mathrm{(iv)\Rightarrow(i)}$ are obvious. The other implications are quite involved.

$\mathrm{(i)\Rightarrow(ii)}$ Let $n\in\left\{  1,2,...\right\}  $ and choose
$e\in L$ such that $\psi\left(  e\right)  \neq0$. Using Theorem
\ref{Stone-Riesz}, we get%
\[
\psi\left(  \left\vert e\right\vert \right)  =\left\vert \psi\left(  e\right)
\right\vert >0.
\]
So, by replacing $e$ by $\left\vert e\right\vert \wedge\mathbf{1}_{X}$ (if
needed), we can assume that $0\leq e\leq\mathbf{1}_{X}$ in $\mathbb{R}^{X}$.
Let $\varepsilon\in\left(  0,\infty\right)  $ and put $\theta=\min\left\{
1,\varepsilon\right\}  $. Given $f_{1},f_{2},...,f_{n}\in L$, we define%
\[
h=\frac{1}{\theta}\left(  e\wedge%
{\displaystyle\bigvee_{k=1}^{n}}
\left\vert f_{k}-\psi\left(  f_{k}\right)  \right\vert \right)  \in
\mathbb{R}^{X}.
\]
From Lemma \ref{Tech_2} it follows that $h\in L$. Moreover, as easy
calculation based on Theorem \ref{Stone-Riesz} and Lemma \ref{Tech_2} yields
that $\psi\left(  h\right)  =0$. It follows that%
\[
\psi\left(  h-e\right)  =-\psi\left(  e\right)  <0.
\]
Therefore, there exists $x\in X$ such that $h\left(  x\right)  -e\left(
x\right)  <0$ (because $\psi$ is positive). We derive that%
\[
e\left(  x\right)  \wedge%
{\displaystyle\bigvee_{k=1}^{n}}
\left\vert f_{k}\left(  x\right)  -\psi\left(  f_{k}\right)  \right\vert
=\theta h\left(  x\right)  <\theta e\left(  x\right)  .
\]
Since $\theta\in\left(  0,1\right]  $ and $0<e\left(  x\right)  \leq1$, we
obtain%
\[%
{\displaystyle\bigvee_{k=1}^{n}}
\left\vert f_{k}\left(  x\right)  -\psi\left(  f_{k}\right)  \right\vert
\leq\theta e\left(  x\right)  \leq\theta\leq\varepsilon
\]
and $\mathrm{(ii)}$ follows.

$\mathrm{(iii)\Rightarrow(i)}$ Let $f\in L$ and observe that if $\varepsilon
\in\left(  0,\infty\right)  $ then there exists $x\in X$ such that%
\[
\left\vert \left(  \mathbf{1}_{X}\wedge f\right)  \left(  x\right)
-\psi\left(  \mathbf{1}_{X}\wedge f\right)  \right\vert \leq\frac{\varepsilon
}{2}\quad\text{and\quad}\left\vert f\left(  x\right)  -\psi\left(  f\right)
\right\vert \leq\frac{\varepsilon}{2}.
\]
By the classical Birkhoff's Inequality (see, e.g., Theorem 1.9 (ii) in
\cite{AB06}), we derive that%
\begin{align*}
\left\vert 1\wedge\psi\left(  f\right)  -\psi\left(  \mathbf{1}_{X}\wedge
f\right)  \right\vert  &  =\left\vert \left(  \mathbf{1}_{X}\wedge f\right)
\left(  x\right)  -1\wedge\psi\left(  f\right)  \right\vert \\
&  +\left\vert \left(  \mathbf{1}_{X}\wedge f\right)  \left(  x\right)
-\psi\left(  \mathbf{1}_{X}\wedge f\right)  \right\vert \\
&  \leq1\wedge\left\vert f\left(  x\right)  -\psi\left(  f\right)  \right\vert
+\frac{\varepsilon}{2}\leq1\wedge\frac{\varepsilon}{2}+\frac{\varepsilon}%
{2}\leq\varepsilon.
\end{align*}
Since $\varepsilon$ is arbitrary in $\left(  0,\infty\right)  $, we conclude
that $\psi\left(  \mathbf{1}_{X}\wedge f\right)  =1\wedge\psi\left(  f\right)
$. This means that $\psi$ is a truncation homomorphism on $L$, as required.

$\mathrm{(ii)\Rightarrow(iv)}$ First, assume that $L$ separates the points of
$X$ and define a map $J:X\rightarrow\mathbb{R}^{L}$ by%
\[
J\left(  x\right)  =\left(  f\left(  x\right)  \right)  _{f\in L}\text{ for
all }x\in X.
\]
By the separation condition, the map $J$ is one-to-one and thus $X$ can be
considered as a subset of $\mathbb{R}^{L}$. Let $\mathbb{R}^{L}$ be endowed
with its usual Tychonoff product topology and put $\omega=\left(  \psi\left(
f\right)  \right)  _{f\in L}$. Denote by $\Omega$ a neighborhood of $\omega$
in $\mathbb{R}^{L}$. There exists $\varepsilon\in\left(  0,\infty\right)  $
and a non-empty finite subset $A$ of $L$ such that%
\[
\omega\in\prod_{f\in L}\Omega_{f}\subset\Omega,
\]
where%
\[
\Omega_{f}=\mathbb{R}\text{ if }f\notin A\quad\text{and\quad}\Omega
_{f}=\left(  \psi\left(  f\right)  -\alpha,\psi\left(  f\right)
+\varepsilon\right)  \text{ if }f\in A.
\]
By Theorem \ref{evaluation}, there exists $x\in X$ such that%
\[
\left\vert \psi\left(  f\right)  -f\left(  x\right)  \right\vert
<\varepsilon\text{ for all }f\in A.
\]
This yields that $x\in\Omega\cap X$ and that $\omega\in\overline{X}$, where
$\overline{X}$ is the closure of $X$ in $\mathbb{R}^{L}$. It follows that
there exists a net $\left(  x_{\sigma}\right)  $ in $X$ converging to $\omega
$, i.e.,%
\[
\lim x_{\sigma}=\left(  \psi\left(  f\right)  \right)  _{f\in L}\text{ in
}\mathbb{R}^{L}.
\]
Choose $f\in L$ and use the continuity of the projection $\pi_{f}%
:\mathbb{R}^{L}\rightarrow\mathbb{R}$ defined by%
\[
\pi_{f}\left(  \psi\right)  =\psi\left(  f\right)  \text{ for all }\psi
\in\mathbb{R}^{L}%
\]
to write%
\[
\lim f\left(  x_{\sigma}\right)  =\lim\pi_{f}\left(  x_{\sigma}\right)
=\pi_{f}\left(  \lim x_{\sigma}\right)  =\psi\left(  f\right)  .
\]
Now, we discuss the general case. An equivalence relation $\sim$ can be
defined on $X$ by putting $x\sim y$ if and only if $f\left(  x\right)
=f\left(  y\right)  $ for all $f\in L$. The set of all equivalence classes
$\widetilde{x}$ is denoted by $\widetilde{X}$. Define a map $T$ from $L$ into
$\mathbb{R}^{\widetilde{X}}$ by putting%
\[
T\left(  f\right)  \left(  \widetilde{x}\right)  =f\left(  x\right)  \text{
for all }f\in L\text{ and }x\in X.
\]
Clearly, $T$ is well-defined and it is linear. Moreover, if $f\in L$ and $x\in
X$ then%
\[
T\left(  f\wedge\mathbf{1}_{X}\right)  \left(  \widetilde{x}\right)  =\left(
f\wedge\mathbf{1}_{X}\right)  \left(  x\right)  =f\left(  x\right)
\wedge1=T\left(  f\right)  \left(  \widetilde{x}\right)  \wedge1=\left(
T\left(  f\right)  \wedge\mathbf{1}_{X^{\sim}}\right)  \left(  \widetilde
{x}\right)  .
\]
Define a map $\widetilde{\psi}$ from $T\left(  L\right)  $ to $\mathbb{R}$ by%
\[
\widetilde{\psi}\left(  T\left(  f\right)  \right)  =\psi\left(  f\right)
\text{ for all }f\in L.
\]
This map is obviously well-defined and it is a truncation homomorphism on
$T\left(  L\right)  $, which is a truncated vector sublattice of
$\mathbb{R}^{\widetilde{X}}$. Since $T\left(  L\right)  $ separates the points
of $\widetilde{X}$, the first case guaranties the existence of a net $\left(
x_{\sigma}\right)  $ in $X$ such that%
\[
\psi\left(  f\right)  =\widetilde{\psi}\left(  T\left(  f\right)  \right)
=\lim\left(  Tf\right)  \left(  \widetilde{x_{\sigma}}\right)  =\lim f\left(
x_{\sigma}\right)  .
\]
This completes the proof of theorem.
\end{proof}

It should be pointed out that Theorem \ref{evaluation} provides the optimal
evaluating characterization of truncation homomorphisms. Indeed, consider the
linear form $\psi$ defined on the unital vector sublattice $C\left(  \left[
0,1\right]  \right)  $ of $\mathbb{R}^{\left[  0,1\right]  }$ by%
\[
\psi\left(  f\right)  =\int_{0}^{1}f\left(  x\right)  dx\quad\text{for all
}f\in C\left(  \left[  0,1\right]  \right)  .
\]
By the first Mean Value Theorem for Definite Integrals, we see that $\psi$ is
a $\overline{1}$-evaluation. However, $\varphi$ is far from being a truncation
homomorphism since it is not a lattice homomorphism.

Recall at this point that $\mathbb{R}^{X}$ is also an associative algebra with
respect to the pointwise product. Also, recall that a linear functional $\psi$
on a subalgebra $A$ of $\mathbb{R}^{X}$ is called an \textsl{algebra
homomorphism} if%
\[
\psi\left(  fg\right)  =\psi\left(  f\right)  \psi\left(  g\right)  \text{ for
all }f,g\in A.
\]
The last result of this section extends the equivalence
$\mathrm{(i)\Leftrightarrow(ii)}$ of \cite[Lemma 2.3]{GJ04} in two directions
(the $C\left(  X\right)  $-case is well-known and can be found, for instance,
in \cite{B15} or \cite{EO07}). On the one hand, the subalgebra under
consideration is not assumed to contains $\mathbf{1}_{X}$ and, on the other
hand, functions in this subalgebra need not be continuous (no topology is involved).

\begin{corollary}
Let $A$ be a truncated vector sublattice and a subalgebra of $\mathbb{R}^{X}$.
A linear functional $\psi$ on $A$ is a Stone homomorphism if and only if
$\psi$ is a positive algebra homomorphism.
\end{corollary}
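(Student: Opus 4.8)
The plan is to prove both implications by leaning on the machinery already developed, treating the term \emph{Stone homomorphism} as a synonym for \emph{truncation homomorphism}; throughout I assume $\psi\neq0$ (a truncation homomorphism is nonzero by definition, so for the reverse implication one reads ``positive algebra homomorphism'' as a nonzero one). For the forward implication, suppose $\psi$ is a truncation homomorphism. By Lemma \ref{Stone-Riesz} it is a lattice homomorphism, hence positive, and it remains only to check multiplicativity. Here I would invoke the net characterization $\mathrm{(iv)}$ of Theorem \ref{evaluation}: there is a net $(x_\sigma)$ in $X$ with $\psi(f)=\lim f(x_\sigma)$ for every $f\in A$. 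Since $A$ is a subalgebra, $fg\in A$, and because each of the limits $\lim f(x_\sigma)$ and $\lim g(x_\sigma)$ exists in $\mathbb{R}$, the product rule for convergent nets gives $\psi(fg)=\lim (fg)(x_\sigma)=\bigl(\lim f(x_\sigma)\bigr)\bigl(\lim g(x_\sigma)\bigr)=\psi(f)\psi(g)$. Thus $\psi$ is a positive algebra homomorphism.

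For the reverse implication, assume $\psi$ is a nonzero positive algebra homomorphism, and verify the two conditions of Theorem \ref{equivalence}. First, $\psi$ is a lattice homomorphism: for $f\in A$ one has $|f|\in A$ and $f^{2}=|f|^{2}$, so $\psi(f)^{2}=\psi(f^{2})=\psi(|f|^{2})=\psi(|f|)^{2}$; since $|f|\geq0$ and $\psi$ is positive, $\psi(|f|)\geq0$, whence $\psi(|f|)=|\psi(f)|$. Second, I claim $\sup\{\psi(f):f\in A,\ f\leq\mathbf{1}_{X}\}=1$. The upper bound is easy: if $f\leq\mathbf{1}_{X}$ then $h:=f^{+}\in A$ satisfies $0\leq h\leq\mathbf{1}_{X}$, so $h^{2}\leq h$ gives $\psi(h)^{2}=\psi(h^{2})\leq\psi(h)$ (using positivity and multiplicativity), forcing $\psi(h)\leq1$, and hence $\psi(f)\leq\psi(h)\leq1$.

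The substantive point is the lower bound, and this is where I expect the main obstacle and where the algebra structure is genuinely needed. Pick $g:=|h_{0}|$ with $\psi(h_{0})\neq0$, so $g\geq0$ and $c:=\psi(g)>0$ by the lattice-homomorphism step. Set $h_{n}:=(ng)\wedge\mathbf{1}_{X}\in A$, so $0\leq h_{n}\leq\mathbf{1}_{X}$. A pointwise check shows $0\leq g-gh_{n}\leq\tfrac1n\mathbf{1}_{X}$: on $\{g\leq1/n\}$ one has $g-gh_{n}=g(1-ng)\leq g\leq1/n$, and on $\{g>1/n\}$ it vanishes. Since $n(g-gh_{n})\in A$ and $n(g-gh_{n})\leq\mathbf{1}_{X}$, the upper-bound step gives $\psi\bigl(n(g-gh_{n})\bigr)\leq1$, i.e. $\psi(g)-\psi(gh_{n})\leq\tfrac1n$. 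Now multiplicativity enters decisively: $\psi(gh_{n})=\psi(g)\psi(h_{n})=c\,\psi(h_{n})$, so $c\,\psi(h_{n})\geq c-\tfrac1n$ and therefore $\psi(h_{n})\geq1-\tfrac1{nc}$. Letting $n\to\infty$ and recalling that each $h_{n}\leq\mathbf{1}_{X}$ yields $\sup\{\psi(f):f\in A,\ f\leq\mathbf{1}_{X}\}\geq1$, hence equality, and Theorem \ref{equivalence} then shows $\psi$ is a truncation homomorphism. The crux is isolating the function $g(\mathbf{1}_{X}-h_{n})$, which is simultaneously nonnegative and uniformly small, so that the algebra-homomorphism identity $\psi(gh_{n})=c\,\psi(h_{n})$ can be played off against the Stone-property truncations $h_{n}$ approximating the characteristic function of $\{g>0\}$; without multiplicativity this coupling is unavailable, and positivity together with the lattice structure alone does not pin the supremum down to $1$.
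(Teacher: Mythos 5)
Your proof is correct, and its overall skeleton matches the paper's: the forward direction is obtained exactly as in the paper, from implication $\mathrm{(i)\Rightarrow(iv)}$ of Theorem \ref{evaluation} and the product rule for convergent nets, and the reverse direction runs through Theorem \ref{equivalence}, with the same squaring trick $\psi(|f|)^{2}=\psi(f)^{2}$ for the lattice-homomorphism property and the same use of $f^{2}\leq f$ (for $0\leq f\leq\mathbf{1}_{X}$) for the upper bound $\psi(f)\leq1$. Where you genuinely diverge is the decisive lower-bound step. The paper sets $a=\sup\{\psi(f):f\in A,\ f\leq\mathbf{1}_{X}\}$ and argues $a^{2}=a$ through a chain of supremum identities, the crucial link being $\sup\{\psi(f^{2}):0\leq f\leq\mathbf{1}_{X}\}=\sup\{\psi(f):0\leq f\leq\mathbf{1}_{X}\}$, after which it discards $a=0$ with the remark that this would ``obviously'' force $\psi=0$. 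Both of those steps are asserted rather than proved (the displayed equality of suprema is, on its face, equivalent to the dichotomy $a^{2}=a$ being established, and $A$ need not contain square roots, so the inclusion $\{f^{2}\}\subseteq\{f\}$ only yields $a^{2}\leq a$). Your construction $h_{n}=(ng)\wedge\mathbf{1}_{X}$, with the pointwise estimate $0\leq g-gh_{n}\leq\tfrac1n\mathbf{1}_{X}$ fed through the already-proved bound $\psi(f)\leq1$ for $f\leq\mathbf{1}_{X}$ and the identity $\psi(gh_{n})=c\,\psi(h_{n})$, gives $\psi(h_{n})\geq1-\tfrac1{nc}$ and hence the supremum equals $1$ directly; this is precisely the kind of argument needed to substantiate the paper's elided steps (the same trick also rescues its ``$a=0$ implies $\psi=0$'' claim). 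So your route is slightly longer but self-contained and rigorous where the paper is slick but terse; the two are otherwise the same reduction to Theorem \ref{equivalence}. One cosmetic point you handled well: the statement's ``Stone homomorphism'' is indeed to be read as ``truncation homomorphism,'' and the nonzero hypothesis must be read into ``positive algebra homomorphism,'' as the paper itself implicitly does.
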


\begin{proof}
The `only if' part follows immediately from the implication
$\mathrm{(i)\Rightarrow(iv)}$ in Theorem \ref{evaluation}. Conversely, suppose
that $\psi$ is a positive algebra homomorphism. We claim that $\psi$ is a
truncation homomorphism. To this end, we shall use Theorem \ref{equivalence}.
First, pick $f\in A$ and observe that%
\[
\psi\left(  \left\vert f\right\vert \right)  ^{2}=\psi\left(  \left\vert
f\right\vert ^{2}\right)  =\psi\left(  f^{2}\right)  =\psi\left(  f\right)
^{2}.
\]
Since $\psi$ is positive, $\psi\left(  \left\vert f\right\vert \right)  \geq0$
and thus $\psi\left(  \left\vert f\right\vert \right)  =\left\vert \psi\left(
f\right)  \right\vert $. We conclude that $\psi$ is a lattice homomorphism.
Now, let $f\in A$ such that $f\leq0$. Since $\psi$ is positive, $\psi\left(
f\right)  \leq0$. Moreover, if $0\leq f\leq1$ then $f^{2}\leq f$ from which it
follows that%
\[
0\leq\psi\left(  f\right)  ^{2}\leq\psi\left(  f^{2}\right)  \leq\psi\left(
f\right)  .
\]
Hence, either $\psi\left(  f\right)  =0$ or $\psi\left(  f\right)  \leq1$. In
summary, if $f\in A$ with $f\leq1$ then $\psi\left(  f\right)  \leq1$. We
derive that the supremum%
\[
a=\sup\left\{  \psi\left(  f\right)  :f\in A\text{ and }f\leq1\right\}
\]
exists in $\mathbb{R}^{+}$. Clearly,%
\[
a=\sup\left\{  \psi\left(  f\right)  :f\in A\text{ and }0\leq f\leq1\right\}
\]
and so%
\begin{align*}
a^{2} &  =\sup\left\{  \psi\left(  f\right)  ^{2}:f\in A\text{ and }0\leq
f\leq1\right\}  \\
&  =\sup\left\{  \psi\left(  f^{2}\right)  :f\in A\text{ and }0\leq
f\leq1\right\}  \\
&  =\sup\left\{  \psi\left(  f\right)  :f\in A\text{ and }0\leq f\leq
1\right\}  =a.
\end{align*}
We derive that $a=0$ or $a=1$. If $a=0$ then, obviously, $\psi=0$ which is not
the case. Thus $a=1$ and the corollary follows.
\end{proof}

\section{Continuous case}

In order to avoid unnecessary repetition we will assume throughout this
section that $X$ is a Tychonoff space. Any truncated vector sublattice of
$\mathbb{R}^{X}$ which is contained in $C\left(  X\right)  $ is called a
\textsl{truncated vector sublattice} of $C\left(  X\right)  $. In the first
result of this section, we shall prove that any truncation homomorphism on a
truncated vector sublattice of $C\left(  X\right)  $ is an evaluation at some
point of the Stone-\v{C}ech compactification $\beta X$ of $X$. The unital
version of this representation theorem has been obtained with a completely
different approach used by Garrido and Jaramillo in \cite{GJ04,GJ01}. Still,
we need to recall that any $f\in C\left(  X\right)  $ can be extended uniquely
to a continuous function $f^{\beta}$ from $\beta X$ into the one-point
compactification $\mathbb{R}\cup\left\{  \infty\right\}  $ of $\mathbb{R}$.

\begin{theorem}
\label{Cech}Let $L$ be a truncated vector sublattice of $C\left(  X\right)  $.
A nonzero linear functional $\psi$ on $L$ is a truncation homomorphism if and
only if there exists $u\in\beta X$ such that%
\[
\psi\left(  f\right)  =f^{\beta}\left(  u\right)  \quad\text{for all }f\in L.
\]

\end{theorem}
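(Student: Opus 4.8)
The plan is to reduce everything to the net characterization already established in Theorem \ref{evaluation}, specifically the equivalence $\mathrm{(i)}\Leftrightarrow\mathrm{(iv)}$, and then to transfer that net from $X$ into the compact space $\beta X$.

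For the `only if' part, I would start from a truncation homomorphism $\psi$ and invoke the implication $\mathrm{(i)}\Rightarrow\mathrm{(iv)}$ to obtain a net $(x_\sigma)$ in $X$ with $\psi(f)=\lim f(x_\sigma)$ for every $f\in L$. Regarding $(x_\sigma)$ as a net in $\beta X$ and using that $\beta X$ is compact, I would extract a subnet $(x_{\sigma_\tau})$ converging to some point $u\in\beta X$. For a fixed $f\in L\subseteq C(X)$, continuity of the extension $f^\beta\colon\beta X\to\mathbb{R}\cup\{\infty\}$ gives $f^\beta(x_{\sigma_\tau})\to f^\beta(u)$ in $\mathbb{R}\cup\{\infty\}$, while $f^\beta(x_{\sigma_\tau})=f(x_{\sigma_\tau})\to\psi(f)$ in $\mathbb{R}$. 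Since $\mathbb{R}$ sits inside $\mathbb{R}\cup\{\infty\}$ as a Hausdorff subspace carrying its usual topology, the latter convergence persists in $\mathbb{R}\cup\{\infty\}$, and uniqueness of limits forces $f^\beta(u)=\psi(f)$; in particular $f^\beta(u)$ is finite for every $f\in L$.

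For the `if' part, I would first record that, since $\psi$ is real-valued, the identity $\psi(f)=f^\beta(u)$ already forces $f^\beta(u)\in\mathbb{R}$ for every $f\in L$, so the value $\infty$ never actually occurs along $L$. As $\psi$ is assumed linear and nonzero, it remains only to verify that $\psi$ preserves truncation. Fixing $f\in L$ and using that $X$ is dense in $\beta X$, I would pick a net $(x_\tau)$ in $X$ with $x_\tau\to u$; because $f^\beta(u)$ is finite, $f^\beta$ is bounded near $u$, whence $f(x_\tau)=f^\beta(x_\tau)\to f^\beta(u)=\psi(f)$ in $\mathbb{R}$. Applying the same net to $f\wedge\mathbf{1}_X\in L$ (which belongs to $L$ by the Stone property), together with continuity of the real map $t\mapsto\min\{t,1\}$, gives
\[
\psi(f\wedge\mathbf{1}_X)=(f\wedge\mathbf{1}_X)^\beta(u)=\lim\min\{f(x_\tau),1\}=\min\{\psi(f),1\},
\]
which is exactly the truncation-homomorphism identity.

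The main obstacle, though a mild one, is the bookkeeping around the one-point compactification $\mathbb{R}\cup\{\infty\}$: one must ensure that the relevant limits land in $\mathbb{R}$ rather than at $\infty$. In the `only if' direction this is guaranteed by uniqueness of limits in the Hausdorff space $\mathbb{R}\cup\{\infty\}$ once the net is known to converge to the finite number $\psi(f)$; in the `if' direction it is built into the hypothesis, since $\psi(f)\in\mathbb{R}$. Everything else is the standard interplay between compactness of $\beta X$, density of $X$ in $\beta X$, and continuity of the extensions $f^\beta$.
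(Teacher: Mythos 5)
Your proposal is correct and follows essentially the same route as the paper: invoke the net characterization from Theorem \ref{evaluation}, pass to a subnet converging in the compact space $\beta X$, and use continuity of the extensions $f^{\beta}$ to identify the limit with $\psi(f)$. The only difference is that you carefully spell out the `if' direction and the $\mathbb{R}\cup\{\infty\}$ bookkeeping, which the paper dismisses as obvious; both of these details are handled correctly.
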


\begin{proof}
The `if' part being obvious, we prove the `only if' part. Assume that $\psi$
is a truncation homomorphism. By Theorem \ref{evaluation}, there exists a net
$\left(  x_{\sigma}\right)  $ in $X$ such that%
\[
\lim f\left(  x_{\sigma}\right)  =\psi\left(  f\right)  \text{ in }%
\mathbb{R}\quad\text{for all }f\in L.
\]
Replacing if necessary $\left(  x_{\sigma}\right)  $ by a subnet, we may
suppose that $\left(  x_{\sigma}\right)  $ converges to some $u\in\beta X$.
Take $f\in L$ and observe that%
\[
f^{\beta}\left(  u\right)  =f^{\beta}\left(  \lim x_{\sigma}\right)  =\lim
f^{\beta}\left(  x_{\sigma}\right)  =\lim f\left(  x_{\sigma}\right)
=\varphi\left(  f\right)  .
\]
This completes the proof.
\end{proof}

As is well known, a subset $L$ of $C\left(  X\right)  $ is said to
\textsl{separate points from closed sets} if whenever $F$ is a closed set in
$X$ and $x\notin F$, then $f\left(  x\right)  \notin\overline{f\left(
F\right)  }$ for some $f\in L$. Here, $\overline{f\left(  F\right)  }$ denotes
the closure of $f\left(  F\right)  $ in $\mathbb{R}$. Such a subset $L$
determines the topology of $X$, meaning that the topology of $X$ coincides
with the weak topology induced by $L$. Moreover, $X$ turns out to be
completely regular and so a Tychonoff space. In particular, if $L$ separates
points and closed sets in $X$, then a net $\left(  x_{\sigma}\right)
_{\sigma}$ of elements of $X$ converges to some $x\in X$ if and only if, for
every $f\in L$, the net $\left(  f\left(  x_{\sigma}\right)  \right)
_{\sigma}$ converges in $\mathbb{R}$ to $f\left(  x\right)  $. On the other
hand, a truncated vector sublattice $L$ of $C\left(  X\right)  $ is said to be
$L$-\textsl{realcompact} if any truncation homomorphism $\psi$ on $L$ is a
point-evaluation on $L$, that is, there exists $x\in X$ such that%
\[
\psi\left(  f\right)  =f\left(  x\right)  \quad\text{for all }f\in L.
\]
The following result is a consequence of the previous theorem.

\begin{corollary}
\label{realcompact}Let $L$ be a truncated vector sublattice of $C\left(
X\right)  $ which separates points and closed sets. Then the following are equivalent.

\begin{enumerate}
\item[\emph{(i)}] $X$ is $L$-realcompact.

\item[\emph{(ii)}] A net $\left(  x_{\sigma}\right)  $ in $X$ converges in $X$
if and only if the net $\left(  f\left(  x_{\sigma}\right)  \right)  $
converges in $\mathbb{R}$ for every $f\in L$.

\item[\emph{(iii)}] For every $u\in\beta X\backslash X$ there exists $f\in L$
such that $f^{\beta}\left(  u\right)  =\infty$.
\end{enumerate}
\end{corollary}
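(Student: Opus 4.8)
The plan is to run the cycle $\mathrm{(i)}\Rightarrow\mathrm{(ii)}\Rightarrow\mathrm{(iii)}\Rightarrow\mathrm{(i)}$, leaning on Theorem \ref{Cech} (which identifies the truncation homomorphisms on $L$ with those $u\in\beta X$ at which every $f^{\beta}$ is finite, via $\psi(f)=f^{\beta}(u)$) together with the net-convergence characterization furnished by the separating hypothesis: a net $(x_\sigma)$ in $X$ converges to $x\in X$ exactly when $f(x_\sigma)\to f(x)$ for every $f\in L$. Two of the three implications should then be short, and the effort will concentrate on the remaining one.

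The implication $\mathrm{(iii)}\Rightarrow\mathrm{(i)}$ I expect to be essentially immediate. Given a truncation homomorphism $\psi$, Theorem \ref{Cech} produces $u\in\beta X$ with $\psi(f)=f^{\beta}(u)$ for all $f\in L$. Since $\psi$ is real-valued, $f^{\beta}(u)$ is finite for every $f\in L$, so $\mathrm{(iii)}$ forbids $u\in\beta X\setminus X$; hence $u\in X$ and $\psi$ is the evaluation at $u$, which is $\mathrm{(i)}$.

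For $\mathrm{(ii)}\Rightarrow\mathrm{(iii)}$ I would argue by contraposition on the location of a point. Let $u\in\beta X$ be such that $f^{\beta}(u)$ is finite for every $f\in L$; the goal is to force $u\in X$. Since $X$ is dense in $\beta X$, pick a net $(x_\sigma)$ in $X$ with $x_\sigma\to u$ in $\beta X$; continuity of each $f^{\beta}$ then gives $f(x_\sigma)\to f^{\beta}(u)\in\mathbb{R}$, so $(f(x_\sigma))$ converges in $\mathbb{R}$ for every $f\in L$. By $\mathrm{(ii)}$ the net $(x_\sigma)$ converges to some $x\in X$, whence $f^{\beta}(u)=\lim f(x_\sigma)=f(x)$ for all $f$. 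As $x_\sigma\to x$ in $X$ entails $x_\sigma\to x$ in $\beta X$, Hausdorffness of $\beta X$ yields $u=x\in X$, and $\mathrm{(iii)}$ follows.

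The hard part will be $\mathrm{(i)}\Rightarrow\mathrm{(ii)}$. Starting from a net $(x_\sigma)$ for which every $(f(x_\sigma))$ converges in $\mathbb{R}$, the candidate limit functional $\psi(f)=\lim f(x_\sigma)$ is clearly linear and preserves truncation (because $\min(\cdot,1)$ is continuous on $\mathbb{R}$); so the instant we know $\psi\neq0$ we may invoke $L$-realcompactness to write $\psi$ as an evaluation at some $x\in X$ and then conclude $x_\sigma\to x$ through the separating hypothesis. The genuine obstacle is exactly the non-degeneracy of $\psi$: a priori the net could escape toward a remainder point $u\in\beta X\setminus X$ at which every $f^{\beta}(u)=0$, collapsing $\psi$ to the zero functional, which is not a truncation homomorphism and hence lies outside the reach of hypothesis $\mathrm{(i)}$. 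Ruling this out is the decisive step; in the unital case it is automatic since $\psi(\mathbf{1}_X)=1$, while in the general non-unital setting one must use the separating property to promote the finiteness of all extensions at a remainder point into an honest point of $X$, and I would focus the main work on producing a witness $f\in L$ certifying $\psi\neq0$.
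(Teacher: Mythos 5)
Your implications $\mathrm{(ii)\Rightarrow(iii)}$ and $\mathrm{(iii)\Rightarrow(i)}$ are correct and coincide with the paper's own arguments: contraposition plus density of $X$ in $\beta X$ for the former, Theorem \ref{Cech} for the latter. The gap is in $\mathrm{(i)\Rightarrow(ii)}$, which you never finish. You correctly reduce it to the non-degeneracy of the limit functional $\psi(f)=\lim f(x_{\sigma})$ (granted $\psi\neq0$, your conditional argument is fine: $\psi$ preserves truncation because $\min(\cdot,1)$ is continuous, $\mathrm{(i)}$ makes it an evaluation at some $x\in X$, and the separation hypothesis then forces $x_{\sigma}\rightarrow x$), but you never produce the witness $f\in L$ with $\psi(f)\neq0$. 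So the one implication carrying the real content of the corollary is absent from your proposal.

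You should know, however, that your diagnosis is exactly right and that this gap cannot be filled: the implication $\mathrm{(i)\Rightarrow(ii)}$ is false as stated. Take $X=\mathbb{R}$ and let $L$ be the set of all compactly supported functions in $C(\mathbb{R})$. This is a truncated vector sublattice of $C(\mathbb{R})$ (truncating by $\mathbf{1}_{\mathbb{R}}$ does not enlarge supports) and it separates points from closed sets. Every $f\in L$ vanishes outside some compact $K$, and every point of $\beta\mathbb{R}\setminus\mathbb{R}$ lies in the closure of $\mathbb{R}\setminus K$, so $f^{\beta}\equiv0$ on the remainder; consequently the point $u$ supplied by Theorem \ref{Cech} for a truncation homomorphism $\psi$ (nonzero by definition) cannot lie in $\beta\mathbb{R}\setminus\mathbb{R}$, and $X$ is $L$-realcompact, i.e. $\mathrm{(i)}$ holds. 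Yet the sequence $x_{n}=n$ satisfies $\lim f(x_{n})=0$ for every $f\in L$ while $(x_{n})$ has no limit in $X$, so $\mathrm{(ii)}$ fails (and $\mathrm{(iii)}$ fails too, since no $f^{\beta}$ is infinite on the remainder). The limit functional really does collapse to $0$, which preserves truncation but is excluded from being a truncation homomorphism --- precisely the escape-to-infinity scenario you described. Note that the paper's own proof has the same defect, hidden in the sentence ``A short moment's thought reveals that $\psi$ is a truncation homomorphism on $L$'': nothing in hypothesis $\mathrm{(i)}$ rules out $\psi=0$. Your instinct here was sharper than the paper's; the equivalence can only be rescued by an extra hypothesis preventing this degeneration, e.g. $\mathbf{1}_{X}\in L$ (the unital case of Garrido and Jaramillo), or by restricting $\mathrm{(ii)}$ to nets whose limit functional is nonzero.
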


\begin{proof}
$\mathrm{(i)\Rightarrow(ii)}$ Assume that $X$ is $L$-realcompact and pick a
net $\left(  x_{\sigma}\right)  $ in $X$ such that $\lim f\left(  x_{\sigma
}\right)  $ exists in $\mathbb{R}$ for every $f\in L$. Define $\psi
:L\rightarrow\mathbb{R}$ by putting%
\[
\psi\left(  f\right)  =\lim f\left(  x_{\sigma}\right)  \quad\text{for all
}f\in L.
\]
A short moment's thought reveals that $\psi$ is a truncation homomorphism on
$L$. Hence, there exists $x\in X$ such that%
\[
\psi\left(  f\right)  =f\left(  x\right)  \quad\text{for all }f\in L.
\]
Since $L$ separates points and closed sets, the net $\left(  x_{\sigma
}\right)  $ converges to $x$ in $X$.

$\mathrm{(ii)\Rightarrow(iii)}$ Arguing by contradiction, assume that there is
some $u\in\beta X\backslash X$ for which%
\[
f^{\beta}\left(  u\right)  \in\mathbb{R}\quad\text{for all }f\in L.
\]
By density, there exists a net $\left(  x_{\sigma}\right)  $ in $X$ which
converges to $u$. Hence, if $f\in L$ then%
\[
\lim f\left(  x_{\sigma}\right)  =\lim f^{\beta}\left(  x_{\alpha}\right)
=f^{\beta}\left(  u\right)  \in\mathbb{R}.
\]
In other words, the net $\left(  f\left(  x_{\sigma}\right)  \right)  $
converges in $\mathbb{R}$ and so, by $\mathrm{(ii)}$, the net converges in
$X$. This yields that $u\in X$, a contradiction.

$\mathrm{(iii)\Rightarrow(i)}$ Let $\psi$ be a truncation homomorphism on $L$.
By Theorem \ref{Cech}, there exists $u\in\beta X$ such that%
\[
\psi\left(  f\right)  =f^{\beta}\left(  u\right)  \quad\text{for all }f\in L.
\]
In particular,%
\[
f^{\beta}\left(  u\right)  \in\mathbb{R}\quad\text{for all }f\in L.
\]
By $\mathrm{(iii)}$, the element $u$ must be in $X$, which leads to the conclusion.
\end{proof}

Now we are close to completing the paper, again with a result that gives a
sufficient condition on $L$ for $X$ to be $L$-realcompact. We need first to
recall that if $\mathbb{R}^{I}$ is a product of real lines equipped with its
Tychonoff product topology the, for every $i\in I$, the projection $\pi
_{i}:\mathbb{R}^{I}\rightarrow\mathbb{R}$ defined by%
\[
\pi_{i}\left(  \left(  x_{j}\right)  \right)  =x_{i}\quad\text{for all
}\left(  x_{j}\right)  \in\mathbb{R}^{I}%
\]
is continuous.

\begin{corollary}
Suppose that $X$ is a closed set in an appropriate Tychonoff product space
$\mathbb{R}^{I}$ and let $L$ be a truncated vector sublattice of $C\left(
X\right)  $ such that $\pi_{i}\in L$ for all $i\in I$. The $X$ is $L$-realcompact.
\end{corollary}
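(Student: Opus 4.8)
The plan is to read a point of $X$ directly off the net furnished by Theorem \ref{evaluation}, the closedness of $X$ in $\mathbb{R}^{I}$ being exactly what prevents this point from escaping. So let $\psi$ be a truncation homomorphism on $L$. By the implication $\mathrm{(i)\Rightarrow(iv)}$ of Theorem \ref{evaluation}, there is a net $\left(x_{\sigma}\right)$ in $X$ such that $\psi\left(f\right)=\lim f\left(x_{\sigma}\right)$ for all $f\in L$. The whole point will be that this net already ``wants'' to converge, and that the only work left is to locate its limit inside $X$.

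First I would identify the candidate limit coordinatewise. Since $\pi_{i}\in L$ for every $i\in I$, the real number
\[
y_{i}=\psi\left(\pi_{i}\right)=\lim\pi_{i}\left(x_{\sigma}\right)
\]
is well defined, and I set $y=\left(y_{i}\right)_{i\in I}\in\mathbb{R}^{I}$. As convergence in the Tychonoff product topology is precisely coordinatewise convergence, this says exactly that $x_{\sigma}\to y$ in $\mathbb{R}^{I}$.

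The decisive step keeps the limit inside $X$: each $x_{\sigma}$ lies in the closed set $X$, so $y\in X$ as well. Hence $x_{\sigma}\to y$ holds also in the subspace topology of $X$, and for every $f\in L\subseteq C\left(X\right)$ the continuity of $f$ yields $f\left(x_{\sigma}\right)\to f\left(y\right)$. Comparing this with $f\left(x_{\sigma}\right)\to\psi\left(f\right)$ gives $\psi\left(f\right)=f\left(y\right)$ for all $f\in L$; that is, $\psi$ is the point-evaluation at $y\in X$. As $\psi$ was an arbitrary truncation homomorphism, $X$ is $L$-realcompact.

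I expect the one point genuinely requiring care to be the transition in the last paragraph from the abstract identity $\psi\left(f\right)=\lim f\left(x_{\sigma}\right)$ to honest topological convergence of $\left(x_{\sigma}\right)$ inside $X$. A priori the net only converges in $\mathbb{R}^{I}$, and without closedness its limit could land in $\mathbb{R}^{I}\setminus X$, outside the common domain of the functions of $L$; it is exactly the hypothesis that $X$ is closed that forbids this escape and licenses the use of continuity of each $f\in L$ at the genuine point $y\in X$. I note in passing that one could instead route the argument through Corollary \ref{realcompact} by verifying its condition $\mathrm{(ii)}$, but the direct construction above is shorter and sidesteps any discussion of whether $L$ separates points from closed sets.
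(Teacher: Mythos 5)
Your proof is correct, and it takes a genuinely more direct route than the paper. The paper reduces the statement to Corollary \ref{realcompact}: it first observes that $L$ separates points and closed sets (because it contains the projections), then verifies condition (ii) of that corollary by the same key computation you perform --- coordinatewise convergence via the $\pi_{i}$, followed by closedness of $X$ in $\mathbb{R}^{I}$ to keep the limit inside $X$ --- and finally invokes the chain of implications of Corollary \ref{realcompact}, which in the paper's logic passes through the Stone--\v{C}ech compactification via Theorem \ref{Cech}. You instead apply Theorem \ref{evaluation}, implication $\mathrm{(i)\Rightarrow(iv)}$, to the given truncation homomorphism $\psi$, build the candidate point $y=\left(  \psi\left(  \pi_{i}\right)  \right)  _{i\in I}$ explicitly, and identify $\psi$ as evaluation at $y$ by uniqueness of limits in the Hausdorff spaces $\mathbb{R}^{I}$ and $\mathbb{R}$. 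What your approach buys: it is self-contained and elementary, it never touches $\beta X$, and --- as you note --- it shows that the separation property plays no role here, since it is only needed to make Corollary \ref{realcompact} applicable, not to prove this corollary. What the paper's approach buys: given the machinery already established, its proof is shorter on the page, and it showcases Corollary \ref{realcompact} as a reusable criterion rather than re-deriving the evaluation point by hand. One small point worth making explicit in your write-up: the step from $x_{\sigma}\rightarrow y$ in $\mathbb{R}^{I}$ to $f\left(  x_{\sigma}\right)  \rightarrow f\left(  y\right)$ uses that the topology on $X$ is the subspace topology inherited from $\mathbb{R}^{I}$, which is exactly what the hypothesis ``$X$ is a closed set in $\mathbb{R}^{I}$'' is meant to supply; you do flag this, and it is the only place where the hypothesis enters.
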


\begin{proof}
Since $L$ contains the projections, then $L$ separates points and closed sets.
Hence, we can apply the previous corollary. Let $\left(  x_{\sigma}\right)  $
be a net such that $\left(  f\left(  x_{\sigma}\right)  \right)  $ converges
in $\mathbb{R}$ for all $f\in L$. In particular, for every $i\in I$, the net
$\left(  \pi_{i}\left(  x_{\sigma}\right)  \right)  $ converges in
$\mathbb{R}$, say to $\ell_{i}$. But then $\left(  x_{\sigma}\right)  $
converges in $\mathbb{R}^{I}$ to $x=\left(  \ell_{i}\right)  $. Since $X$ is
closed in $\mathbb{R}^{I}$ we derive that $x\in X$ and $\mathrm{(ii)}$ in
Corollary \ref{realcompact} allows us to conclude.
\end{proof}

\end{document}